\newcommand{\titel}
{Probability functions in the context of signed involutive meadows}
\newtheorem{theorem}{Theorem}[subsection]
\newtheorem{definition}[theorem]{Definition}
\newcommand{\midd}{\:|\:}
\newcommand{\e}{\operatorname{{\mathbf a}}}
\newcommand{\sg}{\operatorname{{\mathbf s}}}
\newcommand{\reals}{\ensuremath{\mathbb{R}}}
\newcommand{\M}{\ensuremath{\mathbb{M}}}
\newcommand{\Nat}{\ensuremath{\mathbb{N}}}
\newcommand{\IL}{\ensuremath{\textit{IL}}}
\newcommand{\PFP}{\ensuremath{\textit{PF}_P}}
\newcommand{\PFFWP}{\ensuremath{\textit{PFF}_{W,P}}}
\newcommand{\BA}{\ensuremath{\textit{BA}}}
\newcommand{\Md}{\ensuremath{\textit{Md}}}
\newcommand{\Sign}{\ensuremath{\textit{Sign}}}
\title{\titel}
\author{
	Jan A.\ Bergstra ~\&~ Alban\ Ponse
	\\
\\
  {\small
	  Informatics Institute,
	  University of Amsterdam}\\
	{\small \url{https://staff.fnwi.uva.nl/j.a.bergstra/}
	\qquad
	        \url{https://staff.fnwi.uva.nl/a.ponse/}
	}
}
\date{}
\begin{document}

\maketitle

\thispagestyle{empty}

\begin{abstract}
The Kolmogorov axioms for probability  functions are placed in 
the context of signed meadows. 
A completeness theorem  is stated and proven for the resulting 
equational theory of probability calculus. Elementary definitions of 
probability theory are restated in this framework.
\\[2mm]
\emph{Keywords and phrases:}
Meadow, Bayes' theorem, Bayesian reasoning
\end{abstract}

\section{Introduction}
The Kolmogorov axioms for probability functions may be considered a module that can be 
included in a variety of more or less formalized contexts. We will propose and investigate 
some consequences of these axioms when placed in the context of involutive meadows, 
that is meadows where inverse is an involution
following the terminology of~\cite{BM2015}.\footnote{%
  This paper is a revision of \texttt{arXiv:1307.5173v2}.
  The paper needed to be revised because we found too late that an equation 
  mentioned as an axiom
  before is in fact derivable from the other axioms as stated. This fact 
  called for a significant change in emphasis and
  presentation. In addition a survey has been provided of some options to 
  turn conditional probability into a total operator.}
 
In particular we will discuss an axiomatization of a probability function (PF) 
on a Boolean algebra.
The Boolean algebra serves as an event space, the PF defined on it produces 
elements of (values in) a signed meadow that serve as probabilities. 
Special focus is on the case where values are chosen in the signed meadow of real numbers. 
The following objectives motivate the line of development in this paper.

\begin{enumerate}
\item To develop an approach towards strictly equational reasoning about probability. 
\item To provide a finite loose equational specification of probability functions.
\item To provide a useful completeness result for equational axioms of probability functions.
\item To investigate some total versions of the conditional probability operator.
\item To initiate the development of an application for the theory of signed meadows as 
outlined in~\cite{BBP2013} and~\cite{BBP2015}.
\end{enumerate}

We will produce an axiom system consisting of twenty-six equational axioms covering Boolean 
algebra, meadows, the sign function, and the PF. 
Then we will introduce several derived operators and prove a number of simple facts, 
including Bayes' theorem. 

These axioms constitute a finite equational basis for the class of Boolean 
algebra based, real-valued PFs. 
In other words, the completeness results of \cite{BBP2013,BBP2015} extend to the case 
with Boolean algebra based PFs.
We understand this result to convey that the set of twenty-six axioms is 
complete in a reasonable sense.

The paper is structured as follows: in the remainder of this section we 
discuss the concept of a meadow in more detail and provide a survey of relevant design options.
In Section~\ref{sec:2} we introduce some preliminaries. 
In Section~\ref{sec:3} we provide equational axioms for a PF, and in
Section~\ref{sec:4} we discuss completeness. 
In Section~\ref{sec:6} we consider multi-dimensional probability functions, and
Section~\ref{sec:7} contains some concluding remarks.
In Appendix~\ref{sec:5} we discuss an example of equational probabilistic reasoning.

\subsection{A survey of design options for the inverse of 0}
A meadow is a ring-like structure equipped with an inverse function. 
A ring based meadow expands a ring with a one place inverse function (inversive notation), 
or a two place division function (divisive notation). 
The terms `inversive notation' and `divisive notation' were coined in \cite{BM2011a}.

The key design choice that needs to be made when contemplating a meadow concerns 
the way it handles the inverse of 0. 
In a rather scattered literature on the subject a plurality of different options has been 
developed and studied, though in varying levels of detail. A brief survey of these endeavours 
sets the stage for the plan of this paper. 
The listing below is incomplete, but it contains all proposals for which we have 
been able to find an unambiguous description. As a criterion regarding this judgement we have 
required that 
(i) it must be possible to find out
when a closed expressions written using $0,1,+,-,\cdot,(-)^{-1}$ is considered to have a 
value in the mathematical structure at hand, 
(ii) for two closed expressions both having a value it must be 
possible to determine equality in the same structure, and 
(iii) the relation between inverse and division must be transparent. 
We will distinguish three design options for ring based meadows and 
three design options for non-ring based meadows.
We will first survey design options for non-ring based meadows.

\subsection*{Non-ring based meadows}
Three options for setting the inverse of zero in a non-ring based meadow stand out, each
involving an error value  which fails to meet the requirements of a ring. 
Distinguishing these options is facilitated by making use of a uniform terminology. 
\begin{description}
\item [Natural inverse.] If $0^{-1}$ is equated with an unsigned infinite value, 
often denoted by $\infty$, then  0 is said to have a natural inverse. 
The use of natural inverse in mathematics dates back to Riemann at least. 
Wheels are the prominent instance of meadows with natural inverse, see~\cite{Carlstrom}.

\item [Signed natural inverse.] If the inverse of zero is equated with a signed 
infinite value (written say as $+\infty = \infty$, 
which differs from $-\infty$)
we propose to speak of a signed natural inverse. 
This design choice underlies the transreals and transrationals, see~\cite{Anderson}.

\item [Common inverse.]
If the inverse of zero is equated with an error value $\e$ then, following~\cite{BP16}, 
zero is said to have a common inverse. 
Common meadows are meadows based on common inverse.
The error value $\e$  satisfies $x+\e = x \cdot \e = - \e = \e$ 
and for that reason 
fails to comply with the requirements for a ring ($0\cdot x=0$). 
Moreover, the error value is unique.
\end{description}
Also in the case of natural inverse and signed natural inverse, the 
error value(s) fail to comply with the requirements for a ring ($0\cdot x=0$).

\subsection*{Ring based meadows}
For ring based meadows three options may be distinguished.
\begin{description}
\item [Partial inverse.] The most prominent ring based meadow leaves the inverse of 0 
undefined and considers inverse to be a partial function. 

Working with partial inverse deviates from mathematical practice to the extent that 
questions like whether or not $1/0 = 2/0$ must be taken seriously. 
When dealing with partial inverse there are no semantic questions about it, but the 
choice of a logic of partial functions leaves substantial room for design variation, 
beginning with a choice between three ways of looking at the truth value of say 
$1/0 = 1/0$: is it considered as being true, or as being false in an overarching  
two-valued logic, or as not being true in an overarching logic which is not two-valued. 

\item [Symmetric inverse.] If the meadow is based on a regular ring and the value of $0^{-1}$ 
is taken to be 0, 0 is said to have a symmetric inverse. 
The meadows of~\cite{BBP2013,BBP2015} and 
several preceding papers are ring based meadows with symmetric inverse. Alternatively
this case is referred to as featuring an involutive inverse, and such meadows are referred to as 
involutive meadows.

\item [Non-involutive inverse.] If the inverse of $0^{-1}$ is taken to be different from 0, 
$(x^{-1})^{-1}=x$ cannot hold, 
that is inverse is not an involution, and inverse is said to be non-involutive.  
The non-involutive meadows discussed in~\cite{BM2015} that satisfy $0^{-1}=1$ are ring based 
meadows with an asymmetric inverse. 
If the inverse of $0^{-1}$ is taken to be say 17 or any (rational or real) number different 
from 0 and 1, 0 is said to have an ad hoc non-involutive inverse. 
Ad hoc non-involutive inverses come into play when formalizing the theory of 
fields in first order logic in the presence of a function symbol for either inverse or division 
(or both).
\end{description}

\subsection{Working with involutive ring based meadows}
In this paper we will work exclusively with ring based involutive meadows, 
which will be referred to simply as meadows. 
The motivation for this choice is that it appears to be a most straightforward way to pursue the 
objectives that were listed above.
However, we do not claim that for the purpose of developing an equational approach to probability 
working with ring based meadows is the best option, neither do we claim that among the three 
options for ring based meadows working with a symmetric inverse is best suited to this objective.

\section{Boolean algebras and meadows}
\label{sec:2}
In this section we specify the mathematical context on which our axiomatization 
is based.
In particular, we provide specifications for Boolean algebras (Section~\ref{subsec:2.2}),
and for events and (signed) meadows (Section~\ref{subsec:2.3}).

\subsection{Boolean algebras}
\label{subsec:2.1}
A Boolean algebra $(B,+,\cdot,',1,0)$ may be defined as a system with at 
least two elements such that $\forall x,y,z\in B$ the 
well-know postulates of Boolean algebra are valid.
Because we want to avoid overlap with the operations of a meadow, we will
consider Boolean algebras with notation from propositional logic,
thus consider $(B, \vee,\wedge,\neg,\top,\bot)$ and adopt the axioms in 
Table~\ref{Ba}.
In~\cite{Pad83} it was shown that the axioms in Table~\ref{Ba} constitute an 
equational basis.

\begin{table}
\centering
\hrule
\begin{align}
\label{eq:1}
(x\vee y)\wedge y&= y\\
\label{eq:2}
(x\wedge y)\vee y&= y\\
\label{eq:3}
x\wedge (y\vee z) &= (y\wedge x)\vee (z\wedge x)\\
\label{eq:4}
x\vee (y\wedge z) &= (y\vee x)\wedge (z\vee x)\\
\label{eq:5}
x\wedge \neg x &= \bot\\
\label{eq:6}
x\vee\neg x&=\top
\end{align}
\hrule
\caption{\BA, a self-dual equational basis for Boolean algebras}
\label{Ba}
\end{table}

\subsection{Valuated Boolean algebras and some naming conventions}
\label{subsec:2.2}
A Boolean algebra can be equipped with a valuation $v$ that assigns to its 
elements values in a signed meadow. 

In this paper we will investigate the special case where the valuation function 
of a valuated Boolean algebra
is a probability  function by requiring that the valuation satisfies the 
Kolmogorov axioms for probability  functions cast to the setting of signed 
meadows. 

By way of notational convention we will from now on assume that $E$ (for events) is the 
name of the carrier of a Boolean algebra, and that $V$ (for values) names the carrier of the 
meadow in a valuated Boolean algebra.

\subsection{Events and signed meadows} 
\label{subsec:2.3}
The set of axioms in Table~\ref{Md} specifies the class of meadows. 

In the setting of probability  functions the elements of the underlying 
Boolean algebra are referred to as events.\footnote{%
  Events are closed under $-\vee-$, which represents alternative occurrence 
  and $-\wedge-$, which represents simultaneous occurrence, and under negation.}
We will use ``value'' to refer to an element of a meadow,\footnote{%
  Rational numbers and real numbers are instances of values.}
and a probability  function  is a valuation (from events to the values 
in a signed meadow).\footnote{%
  We will exclude probability functions with negative values, a phenomenon known in 
  non-commutative probability theory,
  leaving the exploration of that kind of generalization to future work.}
 
\begin{table}
\centering
\hrule
\begin{align}
\label{eq:7}
	(x+y)+z &= x + (y + z)\\
\label{eq:8}
	x+y     &= y+x\\
\label{eq:9}
	x+0     &= x\\
\label{eq:10}
	x+(-x)  &= 0\\
\label{eq:11}
	(x \cdot y) \cdot  z &= x \cdot  (y \cdot  z)\\
\label{eq:12}
	x \cdot  y &= y \cdot  x\\
\label{eq:13}
	1\cdot x &= x \\
\label{eq:14}
	x \cdot  (y + z) &= x \cdot  y + x \cdot  z\\
\label{eq:15}
	(x^{-1})^{-1} &= x \\
\label{eq:16}
	x \cdot (x \cdot x^{-1}) &= x
\end{align}
\hrule
\caption{\Md, a set of axioms for meadows}
\label{Md}
\end{table}

An expression of type $E$ is an event expression or an event term, an expression of 
type $V$ is a value expression or equivalently a value term.  
In the signature of a valuated Boolean algebra there is just one notation for a probability 
function, the function symbol $P$.\footnote{%
  In some cases the restriction to a single probability function $P$ is impractical and 
  providing a dedicated sort for such functions brings more flexibility and expressive power. 
  This expansion may be achieved 
  in different ways. }

The axioms in Table~\ref{t:a} specify the sign function $\sg(\_)$, 
which presupposes an ordering $<$ on its domain and is defined by
\[\sg(x) =
\begin{cases}
-1 &\text{ if }x < 0,\\
0 &\text{ if }x = 0,\\
1 &\text{ if }0 < x.
\end{cases}\] 
Before commenting on these axioms, we introduce some abbreviations.
First, we shall further write $x-y$ for $x+(-y)$. Below we define
the conditional expression
$p \lhd q \rhd r$, two notations for a division operator, absolute value,
and orderings, where 
$p, q$ and $r$ range over $V$, the carrier of the 
signed meadow in a valuated Boolean algebra.

\begin{table}[t]
\centering
\hrule
\begin{align}
\label{eq:17}
\sg(1_x)&=1_x\\
\label{eq:18}
\sg(0_x)&=0_x\\
\label{eq:19}
\sg(-1)&=-1\\
\label{eq:20}
\sg(x^{-1})&=\sg(x)\\
\label{eq:21}
\sg(x\cdot y)&=\sg(x)\cdot \sg(y)\\
\label{eq:22}
0_{\sg(x)-\sg(y)}\cdot (\sg(x+y)-\sg(x))&=0
\end{align}
\hrule
\caption{$\Sign$, a set of axioms for the sign operator}
\label{t:a}
\end{table}

\begin{enumerate}
\item $1_p  =_{\texttt{def}} p\cdot p^{-1},$
\item $0_p  =_{\texttt{def}} 1 - 1_p,$
\item $p \lhd q \rhd r =_{\texttt{def}} 1_q \cdot p + 0_q \cdot r,$
\item $\dfrac{p}{q} =_{\texttt{def}} p \cdot q^{-1},$ 
\item $p/q =_{\texttt{def}} \dfrac{p}{q},$ 
\item $|p| =_{\texttt{def}} \sg(p) \cdot p,$
\item $p < q =_{\texttt{def}} \sg(q-p) = 1$, and
\item 
$p \leq q =_{\texttt{def}} \sg(\sg(q-p) +1) = 1$.
\end{enumerate}

In Table~\ref{t:a}, axiom~\eqref{eq:22} is an equational representation of the 
conditional equation
\[\sg(x) = \sg(y)~\to~\sg(x+y)=\sg(x).\]
This can be seen as follows: if $\sg(x)=\sg(y)$, then $0_{\sg(x)-\sg(y)}=1$ and
$(\sg(x+y)-\sg(x))=0$, and if $\sg(x)\ne\sg(y)$ then
$0_{\sg(x)-\sg(y)}=0$.

Together, Tables~\ref{Md} and~\ref{t:a} contain the axioms $\Md+\Sign$ for signed 
meadows (we write $+$ instead of $\cup$).
In~\cite{BBP2013} the following identities were shown to be consequences of
$\Md + \Sign$:
\begin{align*}
\sg(x^2)&=1_x
&\sg(x)^{-1}&=\sg(x)\\
\sg(x^3)&=\sg(x)
&\sg(\sg(x))&=\sg(x)
\end{align*}
We further note that the equivalence
$\sg(\sg(p)+1)=1\iff p  = \sg(p) \cdot p = |p|$
is provable from $\Md + \Sign$ (this follows easily from
Theorem~\ref{CT} below).

We will also consider the subclass of signed \emph{cancellation meadows}.
A cancellation meadow satisfies the Inverse Law (IL) of Table~\ref{IL}.

\begin{table}[b]
\centering
\hrule
\begin{align*}
x\neq 0 ~\longrightarrow~ x\cdot x^{-1} &=1.
\end{align*}
\hrule
\caption{Inverse Law (IL)}
\label{IL}
\end{table}
\section{Signed meadow based probability calculus} 
\label{sec:3}
In Section~\ref{subsec:3.1} we formulate axioms for a probability  function. 
Following the methods of abstract data type specification we will focus 
on axioms in equational form. Then, we
discuss a plurality of versions of the conditional probability operator 
(Section~\ref{subsec:3.2})
and some properties thereof, in particular versions of Bayes' theorem. 
Finally, we consider independent events (Section~\ref{subsec:3.3}).

\subsection{Equational axioms for a probability function}
\label{subsec:3.1}
In Table~\ref{PF} we define the set $\PFP$ of axioms for a probability function.
These axioms represent Kolmogorov's axioms in the context of a Boolean algebra 
(rather than a universe of sets) and a signed meadow
(instead of a field). Axiom~\eqref{eq:25} expresses that the sign of $P(x)$ is nonnegative. 
Axiom~\eqref{eq:26} distributes $P$ over finite unions. In the
absence of an infinitary version of axiom~\eqref{eq:26} we 
consider these axioms to constitute an axiomatization for the restricted 
concept of probability  functions only, 
rather than for probability measures in general.

\begin{table}
\centering
\hrule
\begin{align}
\label{eq:23}
P(\top) &=1\\
\label{eq:24}
P(\bot) &=0\\
\label{eq:25}
P(x) & = |P(x)|\\
\label{eq:26}
P(x \vee y) &= P(x)+P(y) - P(x \wedge y)  
\end{align}
\hrule
\caption{$\PFP$, a set of axioms for a probability function with name $P$}
\label{PF}
\end{table}

In combination with the axioms $\BA+\Md$,
the two axioms~\eqref{eq:24} and~\eqref{eq:26} in Table~\ref{PF}
can be replaced by the
single axiom
\begin{equation}
\label{eq:A}
\tag{$\dagger$}
P(x)=P(x\wedge y)+P(x\wedge \neg y)
\end{equation}
where the expressions $x\wedge y$ and $x\wedge \neg y$ characterize two disjoint 
(mutually exclusive) events:
axiom~\eqref{eq:24} follows from 
$P(x)=P(x\wedge x)+P(x\wedge \neg x)$, 
thus $P(x\wedge \neg x)=P(\bot)=0$, and axiom~\eqref{eq:26} follows from 
\[P(x\vee y)\stackrel{\eqref{eq:A}}=P((x\vee y)\wedge x)+P((x\vee y)\wedge \neg x)
=P(x)+P(y\wedge \neg x)\]
and $P(y)\stackrel{\eqref{eq:A}}=P(y\wedge x)+P(y\wedge\neg x)$,
thus $P(y\wedge\neg x)=P(y)-P(x\wedge y)$.
Conversely, axiom~\eqref{eq:A} follows from~\eqref{eq:24} and~\eqref{eq:26}:
\begin{equation}
\label{eq:B}
\tag{$\ddagger$}
P(x)=P((x\wedge y)\vee(x\wedge \neg y))=P(x\wedge y)+P(x\wedge \neg y)-P(\bot).
\end{equation}

A valuated Boolean algebra equipped with a valuation $P$ in some signed meadow $\M$
with carrier $V$ that satisfies all axioms of 
\[\BA+\Md+\Sign+\PFP\] 
will be called a $K(\M,P)$-structure.

\begin{theorem}[Disjoint event factorization]
\label{DisjEvFact} 
$\BA+\Md+\PFP\vdash P(x) = P(x \wedge y) + P(x \wedge \neg y)$.
\end{theorem}

\begin{proof}This is~\eqref{eq:B}, which is shown above.
\end{proof}

\begin{theorem}[Probability upper bound] 
$\BA+\Md+\Sign+\PFP\vdash P(x) \leq 1$.
\end{theorem}

\begin{proof}
First notice
$1 = P(\top) = P(x \vee \neg x) = (P(x) + P(\neg x)) - P(\bot) = P(x) + P(\neg x)$,
so $P(x) = 1- P(\neg x)$.
Because $P(\neg x)\geq 0$ we conclude $P(x) \leq 1$.
\end{proof}

The following theorem asserts in equational form the conditional equation 
$P(y)= 0 \to P(x \wedge y) = 0$, 
using inversive notation.

\begin{theorem} 
\label{PrZeroPropagation}
$ \BA+\Md+\Sign+\PFP \vdash P(x \wedge y)\cdot P(y)\cdot P(y)^{-1} = P(x \wedge y).$
\end{theorem}

\begin{proof} Let $\phi(u,v) $ be as follows:
\[\displaystyle \phi(u,v) \equiv \Bigl(1 - 
\frac{|u| + |v|}{|u| + |v|}\Bigl) \cdot u.\] 
Now $(\reals_0,\sg)\models \phi(u,v) = 0$, and
using the completeness theorem of~\cite{BBP2015} one obtains
$\Md+\Sign \vdash \phi(u,v) = 0$. Substituting 
$P(y \wedge x)$ for $u$ and $P(y \wedge \neg x)$ for $v$ 
and applying Theorem~\ref{DisjEvFact}, one derives
\begin{align*}
\BA+\Md+\Sign+\PFP \vdash 
0 &=\Bigl(1 - \frac{|P(y \wedge x)| + |P(y \wedge \neg x)|}
    {|P(y \wedge x)| + |P(y \wedge \neg x)|}\Bigl) \cdot P(y \wedge x)\\
&=\Bigl(1 - \frac{P(y)}{P(y )}\Bigl) \cdot P(y \wedge x),
\end{align*}
from which the required result follows immediately. 
\end{proof}

\subsection{Conditional probability as a total operator: four options}
\label{subsec:3.2}
Conditional probability $P(x \midd y)$ of event $x$ relative to event $y$ is 
conventionally understood as a partial function 
of $x$ and $y$, defined only if $P(y)$ is nonzero. The objective of 
developing an equational logic for probability theory 
suggests that total versions of the conditional probability operator ought to 
be contemplated.

Conditional probability defined according to Kolmogorov is written below 
as $P^\star(x\midd y)$, where variables $x$ and $y$ range over $E$, 
and is defined by 
\[
\textup{$P^\star(x\midd y) =_{\texttt{def}}\dfrac{P(x\wedge y)}{P(y)}\lhd P(y) \rhd \uparrow$.}
\]
Here $\uparrow$ denotes that the result is undefined.\footnote{%
  We  assume that in a context of partial functions 
  an identity $t=r$ is valid if either 
  both sides are undefined or both sides are defined and equal. This 
  convention, however,  leaves room for alternative readings of the
  expressions at hand. 
  In particular the definition given for $x \lhd y \rhd z$ implies that 
  whenever $t$ is undefined, so is $t \lhd r \rhd s$. That is not 
  a very plausible feature of the conditional and in the presence of partial 
  operations the conditional operator requires a different definition. These 
  complications are to some extent avoided, or rather made entirely explicit,  
  when working with total functions. The use of the notation $P^\star(-|-)$ 
  instead of the common notation $P(-|-)$ is justified by the fact that 
  unavoidably $P^\star(-|-)$ inherits properties from the equational 
  specification of the functions from which it has been made up. Such 
  properties need not not coincide with what is expected from $P(-|-)$.} 
The key advantage of partial conditional probability is that one does not 
introduce a value for, say $P(x\midd \bot)$ 
which might be subsequently disputed. 

Four ways of making conditional probability defined on all inputs will now be 
distinguished.

\begin{definition}[Zero-totalized conditional probability]~
\label{DEFcpzero}
$P^0(x\midd y) =_{\textup{\texttt{def}}}\dfrac{P(x\wedge y)}{P(y)}$.
\end{definition}

We notice that $P^0(\top \midd \bot)=0$, a choice for which no convincing 
philosophical motivation can be put forward. Two advantages can be put 
forward in favour of $P^0(-|-)$:
the logical simplicity that comes with it being total and the calculational 
simplicity that comes with choosing 0 as a value
for $P^0(x \midd y)$ when $P(y) = 0$. 
The following properties are immediate:
\[
P^0(x\midd x) = \dfrac{P(x)}{P(x)}\quad\text{and}\quad
P(x) = P(x) \cdot P^0(x\midd x).
\]
Moreover we have `joint probability factorization':
\[P(x\wedge y) = P(x \wedge y) \cdot P(y) \cdot P(y)^{-1} = (P(x\wedge y)/P(y) )\cdot P(y)  
=P^0(x\midd y) \cdot P(y),\]
and `total probability':  
\begin{align*}
P(x) &=P(x \wedge y) + P(x \wedge \neg y)
\\
&= 
P(x \wedge y) \cdot P(y) \cdot P(y)^{-1} +  
P(x \wedge \neg y) \cdot P(\neg y) \cdot P(\neg y)^{-1}
\\
&= 
P^0(x \midd y) \cdot P(y) + P^0(x \midd \neg y) \cdot P(\neg y).
\end{align*}
As another illustration of the latter advantage we provide below a proof of 
Bayes' rule in its simplest form. 
 
\begin{definition}[One-totalized conditional probability]~
\label{DEFcpone}
$P^1(x\midd y) =_{\textup{\texttt{def}}}\dfrac{P(x\wedge y)}{P(y)}\lhd P(y)\rhd 1$.
\end{definition}
We will write $x \to y$ for $\neg x \vee y$. The principal advantage of 
one-totalized conditional probability over zero-totalized conditional 
probability is the validity of the following rule, which provides some
intrinsic motivation for this design of conditional probability:
\[ (x \to y) = \top ~\Rightarrow~ P^1(x \midd y) = 1.\] 
If $\alpha \in \{\star,0,1\}$ then the function 
\[P \circ^\alpha y =_{\texttt{def}} \lambda x \in E.P^\alpha(x \midd y)\]
is not a probability function for each $y$. In particular,  
if $P(y)= 0$,  $P \circ^\alpha y$ will fail to comply with either 
$P \circ^\alpha y(\top) = 1$ or with $P \circ^\alpha y(\bot ) = 0$. 
Now $\lambda P \in \mathit{PF}. P \circ^\alpha y$ being the well-known
update operator that goes with some applications of Bayes' theorem, it is a 
reasonable requirement that this very operator becomes total as well.  We 
will introduce two options for conditionalization which achieve this 
requirement.
 
\begin{definition}[Safe conditional probability]~
\label{DEFcpsafe}
$P^s(x\midd y) =_{\textup{\texttt{def}}}
\dfrac{P(x\wedge y)}{P(y)}\lhd P(y)\rhd P(x)$.
\end{definition}
We find that $P \circ^{s}y = P$ if $P(y)=0$, which allows the view that 
$\lambda P.P \circ^{s}y$ is an operator mapping probability functions to 
probability functions for all events $y$, or stated differently that 
$\lambda y.(\lambda P.P \circ^{s}y)$ is a total 
mapping from events to probability function transformations. 
$P \circ^s$ is safe because it enforces no update when an inconsistency is 
observed. 

Yet another way to achieve this property of a conditional update is to return 
an exceptional value, in this case the canonical probability function for an 
atomic event.
An atom in $E$ is an event $a \in E$ which satisfies 
$\texttt{atom}(a)  =_{\texttt{def}} 
\forall x \in E.(x \wedge a = a ~ \texttt{OR} ~ x \wedge a = \bot)$. 
For an atom $a \in E$ the probability function $\texttt{pf}_a$ 
is defined by:
\[\texttt{pf}_a(x) =_{\texttt{def}} \begin{cases}
1 &\text{if $x \wedge a =a$},\\
0 & \text{if $x\wedge a=\bot$}.
\end{cases}
\]
\begin{definition}[Exception raising conditional probability for $a\in E$]~
\label{DEFcpexcep}
\[P^{e/a}(x\midd y) =_{\textup{\texttt{def}}}
\dfrac{P(x\wedge y)}{P(y)}\lhd P(y)\rhd {\textup{\texttt{pf}}_a(x)}.\]
\end{definition}
For $P^0(-|-), P^s(-|-),$ and  $P^{e/a}(-|-)$ we are not aware of earlier 
definitions, whereas $P^1(-|-)$ has been considered by Adams in~\cite{Adams}, 
and in subsequent literature. 
For a survey of conditional logic and conditional probabilities 
we refer to~\cite{Milne}. 

Of particular importance given its ubiquitous use is Bayes' theorem. 
Bayes' theorem takes different forms for different versions of conditional 
probability and in each of these cases it
appears as a consequence of \BA+\Md+\Sign+\PFP.

\begin{theorem}[Versions of Bayes' theorem]
\label{thm:1}
In $\BA+\Md+\Sign+\PFP$ the following equations are derivable:
\begin{align*}
1.&~~
P^0(x\midd y) =\dfrac{P^0(y\midd x) \cdot P(x)}{P(y)}
&(\textit{Bayes' theorem for $P^0(-|-)$}),\\[1mm]
2.&~~
P^1(x\midd y) =\dfrac{P^1(y\midd x) \cdot P(x)}{P(y)}\lhd P(y) \rhd 1 
&(\textit{Bayes' theorem for $P^1(-|-)$}),\\[1mm]
3.&~~
P^s(x\midd y) =\dfrac{P^s(y\midd x) \cdot P(x)}{P(y)}\lhd P(y) \rhd P(x)
&(\textit{Bayes' theorem for $P^s(-|-)$}),\\[1mm]
4.&~~
P^{e/a}(x\midd y) =\dfrac{P^{e/a}(y\midd x) \cdot P(x)}{P(y)}\lhd P(y) \rhd 
\normalfont{\texttt{pf}_a(x)}
&(\textit{Bayes' theorem for $P^{e/a}(-|-)$}).
\end{align*}
\end{theorem}

\begin{proof} 
Version 1 can be shown as follows:
\begin{align*}
P^0(x \midd y) &= \dfrac{P(x \wedge y)}{P(y)}
\\[2mm]
&= \dfrac{P(y \wedge x)}{P(y)}\cdot \dfrac{P(x)}{P(x)}
&&\text{by Theorem~\ref{PrZeroPropagation}}
\\[2mm]
&= \dfrac{P(y \wedge x)}{P(x)}\cdot \dfrac{P(x)}{P(y)}
\\[2mm]
&= \dfrac{P^0(y\midd x) \cdot P(x)}{P(y)}.
\end{align*}
Version 2 can be shown as follows:
\begin{align*}
P^1(x \midd y)
&= \dfrac{P(x\wedge y) }{P(y)}\lhd P(y) \rhd 1 
\\[2mm]
&=  \dfrac{P(y)}{P(y)}\cdot\dfrac{P(x\wedge y)}{P(y)} 
+ \Bigl(1 - \dfrac{P(y)}{P(y)}\Bigl)
\\[2mm]
&=
\dfrac{P(y)}{P(y)}\cdot \dfrac{ \Bigl(\dfrac{P(y\wedge x)\cdot {P(x)}}{P(x)}\Bigl)}{P(y)} 
+ \Bigl(1 - \dfrac{P(y)}{P(y)}\Bigl)
&&\quad\text{by Theorem~\ref{PrZeroPropagation}}
\\[2mm]
&=
\dfrac{P(y)}{P(y)}\cdot \dfrac{ \Bigl(\dfrac{P(y\wedge x)}{P(x)}\lhd P(x)\rhd 1\Bigl) 
\cdot P(x)}{P(y)} 
+ \Bigl(1 - \dfrac{P(y)}{P(y)}\Bigl)
\\[2mm]
&= 
\dfrac{P(y)}{P(y)}\cdot\dfrac{P^1(y\midd x) \cdot P(x)}{P(y)} + 
\Bigl(1 - \dfrac{P(y)}{P(y)}\Bigl) 
\\[2mm]
&=\dfrac{P^1(y\midd x) \cdot P(x)}{P(y)}\lhd P(y) \rhd 1.
\end{align*}

Versions 3 and 4, thus Bayes' theorem for the cases $P^s(-|-)$ and for $P^{e/a}(-|-)$,
involve similar calculations and are left to the reader.
\end{proof}

\subsection{Independence of events}
\label{subsec:3.3}
Given a $K(\M,P)$-structure, two events $x$ and $y$ are said to be independent relative 
to that structure if $P(x \wedge y) = P(x) \cdot P(y)$ is valid.

\begin{theorem}
Events $x$ and $y$ are independent if and only if 
$P^0(x\midd y) = P(x) \cdot P^0(y\midd y)$ 
and equivalently if and only if $P^0(y\midd x) = P(y) \cdot P^0(x\midd x)$.
\end{theorem}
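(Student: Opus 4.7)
The plan is to treat the equivalence as two implications around the equation $P(x \wedge y) = P(x) \cdot P(y)$, and then to obtain the second stated equivalent by symmetry in $x$ and $y$ (using commutativity of $\wedge$).

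For the forward direction, assume $P(x \wedge y) = P(x) \cdot P(y)$. I would just unfold definitions: $P(x\midd y) = P(x,y)/P(y) = P(x \wedge y) \cdot P(y)^{-1} = P(x) \cdot P(y) \cdot P(y)^{-1} = P(x) \cdot (P(y)/P(y)) = P(x) \cdot P(y\midd y)$, using associativity and commutativity of $\cdot$ from Md.

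For the converse, starting from $P(x\midd y) = P(x) \cdot P(y\midd y)$, I would rewrite both sides using Definition~\ref{DEF1} as $P(x \wedge y) \cdot P(y)^{-1} = P(x) \cdot P(y) \cdot P(y)^{-1}$, and then multiply both sides on the right by $P(y)$. On the left, axiom~\eqref{eq:27} reduces $P(x \wedge y) \cdot P(y)^{-1} \cdot P(y) = P(x \wedge y) \cdot P(y) \cdot P(y)^{-1}$ to $P(x \wedge y)$. On the right, meadow axiom~\eqref{eq:16} (i.e.\ $u \cdot u \cdot u^{-1} = u$, applied to $u = P(y)$) reduces $P(x) \cdot P(y) \cdot P(y)^{-1} \cdot P(y)$ to $P(x) \cdot P(y)$. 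Hence $P(x \wedge y) = P(x) \cdot P(y)$. The equivalence of the third formulation follows by swapping the roles of $x$ and $y$ and invoking the commutativity of $\wedge$ (derivable in BA), which gives $P(x \wedge y) = P(y \wedge x)$ and $P(x) \cdot P(y) = P(y) \cdot P(x)$.

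The only subtle point is the converse: a naive argument would want to ``cancel'' $P(y)^{-1}$, which is generally illegitimate without the Inverse Law IL. The key observation, and in my view the main obstacle worth flagging in the write-up, is that axiom~\eqref{eq:27} is precisely tailored so that the awkward factor $P(y) \cdot P(y)^{-1}$ collapses in front of $P(x \wedge y)$, so the argument goes through in any (possibly non-cancellation) pmf-structure. Everything else is routine equational manipulation inside BA + Md.
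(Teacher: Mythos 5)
Your proposal is correct and takes essentially the same route as the paper's own proof: unfold the definitions for the forward direction, and for the converse multiply both sides by $P(y)$ and collapse the resulting factors. The only difference is that you spell out the final collapsing step via axiom~\eqref{eq:27} on the left and axiom~\eqref{eq:16} on the right, which the paper leaves implicit in its phrase ``which implies $P(x \wedge y) = P(x)\cdot P(y)$''; that is a worthwhile clarification but not a different argument.
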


\begin{proof}
If $x$ and $y$ are independent, then 
\[P^0(x\midd y) = P(x \wedge y)/P(y) = (P(x) \cdot P(y))/P(y) 
= P(x) \cdot P^0(y\midd y),\]
and similarly one finds $P^0(y\midd x) = P(y) \cdot P^0(x\midd x)$. 

Conversely, from $P^0(x\midd y) = P(x) \cdot P^0(y\midd y)$ one finds
$P(x \wedge y)/P(y) = P(x) \cdot (P(y)/P(y))$,
so multiplying both sides 
by $P(y)$ yields 
\[P(x \wedge y) \cdot (P(y)/P(y))= P(x) \cdot (P(y)/P(y))\cdot P(y),\]
which implies
$P(x \wedge y) = P(x) \cdot P(y)$ by Theorem~\ref{PrZeroPropagation}.
\end{proof}

\section{Logical aspects of equations for probability functions}
\label{sec:4}
In this section we provide a completeness result for 
$\BA + \Md+\Sign+\PFP$ (Section~\ref{subsec:4.1}) and
discuss the use of a free Boolean algebra as an event space
(Section~\ref{subsec:4.2}).

\subsection{Completeness of {$\BA+\Md+\Sign+\PFP$}}
\label{subsec:4.1}
In \cite{BBP2013} it is shown that $\Md+\Sign$ constitutes a finite basis for the 
equational theory of signed cancellation meadows. Stated differently: 
for each equation $t=r$, if $\Md+\Sign+\PFP+ \IL \models t=r$
then also $\Md+\Sign+\PFP \vdash t=r.$
This fact is understood as a completeness results because a stronger set of axioms 
would necessarily exclude some meadows that are expansions of ordered fields.
In a preceding version of this paper\footnote{%
  \url{http://arxiv.org/abs/1307.5173v1}} 
it was shown that the basis theorem 
extends to the setting with probability functions:
if $\BA+\Md+\Sign+\PFP+ \IL \models t=r$
then also $\BA + \Md+\Sign+\PFP \vdash t=r.$

For the purposes of this paper we prefer to make use of a different completeness result for the 
same equational theory that allows us  to obtain a more intuitively appealing completeness result 
for the axiom system $\BA+\Md+\Sign+\PFP$.
This second completeness result is given in terms of 
validity of equations relative to a single signed meadow rather than 
in an elementary class of structures.

We recall the following result from~\cite[Thm.3.14]{BBP2015}, where we write
$\reals_0$ for the meadow that is the expansion of the field of real numbers $\reals$ with 
total inverse  operator and $0^{-1}=0$, and $(\reals_0,\sg)$ for $\reals_0$ expanded with
the sign function $\sg(\_)$. 

\begin{theorem}
\label{CT}
For an equation $t=r$ in the signature of signed meadows: 
$(\reals_0,\sg) \models t = r$ if and only if $\Md + \Sign \vdash t=r$.
\end{theorem}

The same completeness result works for conditional equations:
\begin{theorem} 
\label{CTtwo}
For a conditional equation $t_1 = r_1 \wedge \ldots \wedge t_n= r_n \to t=r$ in 
the signature of signed meadows: 
$(\reals_0,\sg) \models t_1 = r_1 \wedge \ldots \wedge t_n= r_n \to t=r$ if and only if 
\[\Md + \Sign \vdash t_1 = r_1 \wedge \ldots \wedge t_n= r_n \to t=r.\]
\end{theorem}
\begin{proof}
The only if part is follows from the soundness of equational logic. 
If 
\[(\reals_0,\sg) \models t_1 = r_1 \wedge \ldots \wedge t_n= r_n \to t=r\]
then 
\[(\reals_0,\sg) \models(1 -(t_1 - r_1) \cdot  ( t_1 - r_1)^{-1})\cdot \ldots 
\cdot  (1 -(t_n - r_n) \cdot  (t_n - r_n)^{-1}) \cdot (t-r) = 0.\]
Using the above completeness theorem for equational logic with respect to $\reals_0$, 
this fact (say $\Phi$) is provable from $\Md + \Sign$. 
Now assuming that $t_1 = r_1 \wedge \ldots \wedge t_n= r_n$ one 
finds that 
\[(1-( t_1 - r_1) \cdot  (t_1 - r_1)^{-1})\cdot \ldots\cdot(1 -( t_n - r_n) 
\cdot  (t_n - r_n)^{-1}) = 1\]
and in combination with $\Phi$ it follows that $t-r = 0$. 
\end{proof}

A $K(\reals_0,P)$-structure is a model of $\BA+\Md+\Sign+\PFP$ that contains the meadow 
of signed reals, $(\reals_0,\sg)$, as the domain of its values.
We will write $K(\reals_0,P)$ for the class of $K(\reals_0,P$)-structures.

Theorem~\ref{CT} can be extended to the setting of 
$K(\reals_0,P)$-structures thus obtaining a 
satisfactory completeness result for $\BA+\Md+\Sign+\PFP$.

\begin{theorem} 
\label{Compl:1}
The axiom system $\BA+\Md+\Sign+\PFP$ is sound and complete for the  
equational theory of $K(\reals_0,P)$.\footnote{%
  More generally, $\BA+\Md+\Sign+\PFP$ is sound for the class of  
  $K(\M,P)$-structures with $\M$ a cancellation meadow.}

\end{theorem}

\begin{proof} Soundness is obvious and therefore we will focus on completeness.
First, a valid equation over the sort of Booleans is provable from
\BA\ because there are no other means to arrive at the validity of such an equation
(no functions from meadows to Booleans).

Next, let $t=r$ be a valid equation over the sort of meadows.
Assume $x_1,..., x_k$ are the variables ranging over $E$ that occur in $t=r$, 
and $y_1,..., y_\ell$ are the meadow variables that occur in $t=r$. 

Let $m$ be the number of occurrences of $P$ in the equation $t=r$. 
We list these occurrences in a linear order
as $P(f_1(x_1,...,x_k)),... , P(f_m(x_1,...,x_k))$ with $f_i$ appropriately 
chosen Boolean expressions over the event variables  $x_1,..., x_k$.

We choose new variables $z_1,...,z_m$ ranging over $V$, the domain of $(\reals_0,\sg)$,
and we define $t^{\prime}$ and $r^{\prime}$ by replacing in $t$ and $r$ each 
occurrence of $P(f_i(x_1,...,x_k))$ by $z_i$ for $1 \leq i \leq m$.

For $x_1,...,x_k$ there are $2^k$ different conjunctions $\bigwedge_{j=1}^{k}x_j'$
with $x_j'\in\{x_j,\neg x_j\}$. 
Let $\alpha_j$ be an enumeration of Boolean expressions for these conjunctions 
($1 \leq j \leq 2^k$) and note that
\begin{equation}
\label{eq:alpha}
\tag{$*$}
\BA \vdash \textstyle
\bigvee_{j=1}^{2^k}\alpha_j = \top.
\end{equation}
We choose $2^k$ new variables $u_j$ ($1 \leq j \leq 2^k$) ranging over $V$.
We intend to use these variables for representing the values of 
$P(\alpha_j)$.

Each expression $f_i(x_1,...,x_k)$ for $1 \leq i \leq m$
can be written as a disjunctive 
normal form by means of a disjunction of expressions of the form $\alpha_j$:
\begin{equation}
\label{eq:fi}
\tag{$\mathit{eq}_i$}
f_i(x_1,...,x_k)= \textstyle\bigvee_{j \in H_i} \alpha_j
\end{equation}
for appropriately chosen subsets $H_i$ of $\{1,...,2^k\}$.

We will now collect $ 2^k + m + 1$ equations that together precisely capture the 
relation between the variables $z_i$ and $u_j$ under the assumption that for some PF $P$, 
$z_i = P(f_i(x_1,...,x_k))$ and $u_j = P(\alpha_j)$ in some $K(\reals_0,P)$-structure. 
These equations are:
\begin{itemize}
\item 
$u_j = \sg(u_j) \cdot u_j$ for $1 \leq j \leq 2^k$, 
expressing that the $u_j$ are nonnegative.
\item 
$z_i = \sum_{j \in H_i} u_j$ for $1 \leq i \leq m$, 
expressing the consequences in terms of probabilities of the 
mentioned disjunctive normal form for the expressions $f_i(x_1,...,x_k)$ in 
equations~\eqref{eq:fi}.
\item 
$\sum_{j=1}^{2^k} u_j =1$, expressing the fact that 
$\sum_{j=1}^{2^k}P(\alpha_j) = 1$, which in turn follows from~\eqref{eq:alpha}.
\end{itemize}
We write $\Phi$ for the conjunction of these equations.

\noindent
\textbf{Claim.} 
If $\BA+\Md+\Sign+\PFP \models t= r$ then $(\reals_0,\sg) \models \Phi \to t^\prime = r^\prime$.

\begin{minipage}[t]{0.968\linewidth} 
\emph{Proof of Claim.}
Assume that $\BA+\Md+\Sign+\PFP \models t=r$. Suppose that $\sigma$ is a 
valuation of the variables $z_i,u_j,y_\ell$ such that 
$(\reals_0,\sg),\sigma \models \Phi$.
Now from the interpretations of the $z_i$ and $u_j$ one may construct a Boolean algebra with 
the $\alpha_j$ as generators 
as well as a PF $P$ on that Boolean algebra. 
In this structure $t= r$ must hold by assumption, and then $t^\prime = r^\prime$
follows by substitution of variables for expressions according to the definition of 
$t^\prime$ and $ r^\prime$. Hence $(\reals_0,\sg),\sigma \models t'=r'$.
\hfill\emph{End proof of Claim.}
\end{minipage}
\\
The completeness result follows from this claim. 
If $(\reals_0,\sg) \models \Phi \to t^\prime = r^\prime$ then 
Theorem~\ref{CTtwo} implies that this conditional equation can be proven from $\Md+\Sign$. 
The substitution $\theta$ 
replacing the variables 
$z_i$ by $P(f_i(x_1,...,x_k))$ and $u_j$ by $P(\alpha_j)$ can then be applied.
Because the axioms of $\BA+\Md+\Sign+\PFP$ 
suffice to prove that $\theta(\Phi)$ holds, and $\theta(t^\prime) \equiv t$ and 
$\theta(r^\prime) \equiv r$ with 
$\equiv$ denoting syntactic equivalence, it follows that
$\BA+\Md+\Sign+\PFP\vdash t=r$.
\end{proof}

\subsection{Using free Boolean algebras as event spaces}
\label{subsec:4.2}
For the purpose of reformulating some elementary aspects of probability theory and statistics the 
generality of working with arbitrary Boolean algebras is inessential, at least at this initial 
stage in the development of an equational callus of probabilities. 
For that reason we will now introduce several simplifying assumptions:

\begin{itemize}
\item  A finite set $C$ of constants for events is provided. Elements of $C$ are called 
primitive events. We
will only consider free Boolean algebras generated by the primitive events. 

\item With $\BA_C$ we will denote the equations for Boolean algebra in a signature which is 
expanded with the constants in $C$.

\item The class of models of $\BA_C+\Md+\Sign+\PFP$ with
a free event space over $C$, $(\reals_0,\sg)$ as its meadow of values,  
and a probability function $P$ is denoted $K_C(\reals_0,P)$.  
Different structures in $K_C(\reals_0,P)$ only differ in the 
choice (interpretation) of the probability function $P$.

\end{itemize}
These assumptions correspond to what is needed for the specification of examples of 
probabilistic reasoning.

\begin{theorem} $\Md + \Sign +  \BA_C + \PFP$ is sound and complete for the equations
of type $V$ that are true in all 
structures in $K_C(\reals_0,P)$. 
In other words, for $t$ and $r$ terms of sort $V$: 
\[\Md + \Sign +  \BA_C + \PFP \vdash t = r ~\text{ if and only if }~
K_C(\reals_0,P) \models t=r.\]
\end{theorem}

\begin{proof}
The proof is merely a reformulation of the proof of Theorem~\ref{Compl:1}.
\end{proof}

\section{Multi-dimensional probability functions}
\label{sec:6}
In this section we provide axioms for multi-dimensional PFs (Section~\ref{subsec:6.1}), and
discuss two elementary issues:
a condition for the existence of a particular universal PF (Section~\ref{subsec:6.2}), 
and the relation between the multi-dimensional and the one-dimensional case
(Section~\ref{subsec:6.3}).

\subsection{Equational axioms for a probability function family}
\label{subsec:6.1}
Let $D= \{a_1,\ldots,a_d\}$ be a finite, non-empty  set. 
The elements of $D$ are referred to as dimensions. 
With
\[A^f_D\]
we denote the set of finite non-empty sequences of elements of $D$ in which each 
dimension occurs at most once, and with
$\ell(w)$ we denote the length of $w \in A^f_D$. 
Note that $A^f_D$ is finite.
Elements of $\smash{A^f_D}$ serve as arities of probability functions on a multi-dimensional 
event space of dimension $\ell(w)$. If $\ell(w)>1$, then $w$ is written 
as a comma-separated
sequence, e.g. $\ell(a_1,a_3)=2$ and we write $(a_1,a_3)\in \smash{A^f_D}$.

Given an event space $E$ and a name $P$ for a probability function, an arity family 
for $D$ is a subset $W$ of $\smash{A^f_D}$ that is closed under permutation and  
under taking non-empty subsequences.  
Given an arity family $W$ for $D$, a function family for $W$ consists  of a function 
$P^w:E^{\ell(w)} \to V$ for each arity $w \in W$. 
A function family for dimension set $D$, arity family $W$ and function name $P$ is a 
\emph{probability function family} (PFF) if it satisfies the axioms of Table~\ref{PFF}. 
Because in an arity repetition of dimensions is disallowed, these axioms 
reduce to what we had already in the case of a single dimension.

\begin{table}
\centering
\hrule
\begin{align}
P^{a,v,b,v'}(y_1,x_1,\ldots,x_m,y_2,z_1,\ldots,z_{n}) 
&= P^{b,v,a,v'}(y_2,x_1,\ldots,x_{m},y_1,z_1,\ldots,z_{n})
\label{eq:wperm}
\\\nonumber
\text{for all $a,b \in D$ and $(a,v,b,v') \in W$,}
&\text{ where $v$ and/or $v'$ can be empty ($m=0$ and/or $n=0$)}
\\[2mm]
\label{eq:marg0}
P^{a}(\top) &= 1
\\[2mm]
\label{eq:marg}
P^{a,v}(\top,x_1,\ldots,x_{k+1}) &= P^{v}(x_1,\ldots,x_{k+1})
\\[2mm]
\label{eq:bot}
P^{w}(\bot,\vec x) &=0
\\[2mm]
\label{eq:nonneg}
P^w(y,\vec x)&= | P^w(y,\vec x)|
\\[2mm]
\label{eq:vee}
P^w(y \vee z,\vec x) &= P^w(y,\vec x)+P^w(z,\vec x) -  P^w(y \wedge z,\vec x)
\end{align}
\hrule
\caption{$\PFFWP$, axioms for a PFF with arity family $W$ and name $P$,
where $a\in D$, $k\in\Nat$, $\vec x = x_1,\ldots,x_k$ and $P(y,\vec x)=P(y)$ if $k=0$, 
and $w = (a,u)\in W$ with $\ell(w)=k+1$}
\label{PFF}
\end{table}

\subsection{Existence of a universal probability function}
\label{subsec:6.2}
A subset $W$ of $A^f_D$ may or may not have a maximal element under inclusion. 
If $W$ has a maximal element $\overline{w}$ and if we have a probability function family 
$(P^w)_{w \in W}$ for $W$, then $P^{\overline{w}}$ serves as a universal element for the 
family of probability functions because all other members of it can be found via successive 
application of the axioms~\eqref{eq:wperm} - \eqref{eq:bot}.

As it turns out some PFFs cannot be extended with a universal PF.  
In the notation of our specification of probability families we will state a 
specific result that  may serve as a necessary 
condition for the possibility to extend a PFF with a universal element. 

\begin{theorem} 
\label{ThmBCSH}
Given a set of dimensions $D=\{a,b,c,d\}$, an arity family $W$ for $D$
that satisfies $W\supset\{(b,c),~(b,d),~(a,d),~(a,c)\}$,
and a  PFF $(P^w)_{w \in W}$,
let $t$ be the following term:
\[t = P^{b,c}(y,z) + P^{b,d}(y,u) + P^{a,d}(x,u) - P^{a,c}(x,z) - P^{b}(y) - P^{d}(u).\]
Then, if $W$ has a maximal element, then $-1 \leq t \leq 0$, that is,
the following two inequalities must hold for $G_{W,P}=\BA+\Md+\Sign+\PFFWP$: 
\[
G_{W,P}\vdash t+1=\sg(t+1)\cdot (t+1)
\quad\text{and}\quad
G_{W,P}\vdash -t=\sg(-t)\cdot -t.
\]
\end{theorem}
Clearly if a PFF for $D$ contains all of $P^{b,c},P^{b,d},P^{a,d},P^{a,c}$ and if it 
fails to meet either one of the mentioned
inequalities on $t$, then a universal PF cannot be found for it.

These facts are known as the BCHS (Bell, Clauser, Horne, Shimony) inequalities. 
Both were formulated and shown in a set theoretic framework for probability theory 
in~\cite{Rastall} and~\cite{Fine2}, 
and  a straightforward proof is given in~\cite[Section~9.2]{Muynck},\footnote{%
  From this pair of inequalities one can derive the original Bell inequalities
  from~\cite{Bell}. The key observation of Bell was that quantum mechanics gives rise to the 
  hypothesis that a four-dimensional event space exists in which a family of joint 
  probabilities for at most two dimensions can be found that violates the inequalities
  from the theorem.} 
which we repeat here.
\begin{proof}[Proof of Theorem~\ref{ThmBCSH} (taken from~\cite{Muynck})]
\begin{align}
P^{b,c,d}(y,z,u) 
&= P^{a,b,c,d}(x,y,z,u) + P^{a,b,c,d}(\neg x,y,z,u)\nonumber
\\
&\leq P^{a,c}(x,z) + P^{a,d}(\neg x, u) \nonumber
\\
&= P^{a,c}(x,z) + P^d(u) - P^{a,d}(x,u),
\label{(9.3)}
\\[2mm]
P^{b,c,d}(\neg y,z,u) 
&= P^{a,b,c,d}(x,\neg y, z, u) + P^{a,b,c,d}(\neg x,\neg y, z, u) \nonumber
\\
&\leq P^{a,d}(x,u) + P^{a,c}(\neg x,z) \nonumber
\\
&= P^{a,d}(x,u) + P^{c}(z) - P^{a,c}(x,z),
\label{(9.4)}
\\[2mm]
0 &\leq P^{b,c,d}(y,\neg z,\neg u) \nonumber
\\
&= P^{b,c}(y,\neg z) - P^{b,c,d}(y,\neg z, u) \nonumber
\\
&=
P^{b}(y) - P^{b,c}(y,z) - P^{b,d}(y,u) + P^{b,c,d}(y,z,u).
\label{(9.5)}
\end{align}
Combining \eqref{(9.3)} and \eqref{(9.5)} yields
\begin{equation}
0 \leq P^{b}(y) - P^{b,c}(y,z) - P^{b,d}(y,u) + P^{a,c}(x,z) + 
P^{d}(u) - P^{a,d}(x,u).
\label{(9.6)}
\end{equation}
From \eqref{(9.5)} and the equality
\[-P^{c,d}(z,u) + P^{c,d}(\neg z,\neg u) = 1 - P^{c}(z) - P^{d}(u)\]
it follows that
\begin{align}
0 &\leq P^{b,c,d}(\neg y, \neg z,\neg u) \nonumber
\\
&= P^{c,d}(\neg z, \neg u) - P^{b,c,d}(y, \neg z,\neg u) \nonumber
\\
&=
1 - P^{b}(y) - P^{c}(z) - P^{d}(u) + P^{b,c}(y,z) + P^{b,d}(y,u) + P^{b,c,d}(\neg y, z, u).
\label{(9.7)}
\end{align}
Then from \eqref{(9.4)} and \eqref{(9.7)} we get
\begin{equation}
0 \leq 1 - P^{b}(y) - P^{d}(u) + P^{b,c}(y,z) + P^{b,d}(y,u) + P^{a,d}(x,u) - P^{a,c}(x,z). 
\label{(9.8)}
\end{equation}
Inequalities \eqref{(9.6)} and \eqref{(9.8)} can be combined to give 
the inequalities of the theorem.
\end{proof}
\subsection{The multi-dimensional case and the one-dimensional case}
\label{subsec:6.3}
The one-dimensional case is obtained from the multi-dimensional case by taking for $D$ a 
singleton set, say $D=\{d\}$, and subsequently forgetting the single name involved. 
The more challenging question then arises if the multi-dimensional case is already 
implicit in the one-dimensional case. The objective of this section is to discuss that 
matter in some detail. 

Some additional terminology will be needed. 
A permutation compatible function family for 
function name $P$ and an arity family $W$ is a
function family for $P$ and $W$ that satisfies the permutation axiom~\eqref{eq:wperm} 
in Table~\ref{PFF}. 

Given a compatible function family $F_{P,W}$ for $P$ and $W$ its hull of one-dimensional 
projections is defined as the collection
of functions $Q:E \to V$ each of which can be obtained from some $P^w:E^{\ell(w)} \to V$ in 
$F_{P,W}$ for 
some $w = (d,u_1,\ldots,u_t)  \in W$ by choosing $e_1,\ldots,e_t$ in $E^t$ and by setting 
$Q(x) = P(x,e_1,\ldots,e_t)$.

We may now look at the specification  $\BA+\Md+\Sign+\PFP$ in a different way: it axiomatizes 
the notion of a probability function rather than of a particular structure with a probability 
function. Then one may define a multi-dimensional PFF (for $P$ and $W$) alternatively as a 
compatible function family  such that each function in its one-dimensional hull is a 
probability function according to the axioms of $\BA+\Md+\Sign+\PFP$.

This is as close as we can get in turning the multi-dimensional case into an application of 
the one-dimensional case. 
It is not a formal reduction because that would require that it is guaranteed that for each 
$Q$ the same meadow is used to determine its status as a probability function. 
The latter constraint lies outside the expressive power of first order equational logic, 
however. This leaves us with the following state of affairs: although $\BA+\Md+\Sign+\PFP$ 
axiomatizes probability functions in a satisfactory manner, when it comes to essential 
allocations such as Theorem~\ref{ThmBCSH}, a more general axiom system is needed to take care 
of the multi-dimensional case.

\section{Concluding remarks}
\label{sec:7}
The incentive for this work came from a talk given by professor Ian Evett on the 
occasion of the retirement
of dr.~Huub Hardy as a driving force behind the MSc Forensic Science at the 
University of Amsterdam.\footnote{%
  This meeting took place at Science Park Amsterdam, Friday June 7, 2013 under the 
  heading ``Frontiers of Forensic Science'', and was organized by Andrea Haker.} 
That talk illustrated the headway that the Bayesian approach to reasoning in forensic 
matters has made in recent 
years. However, Evett also highlighted the conceptual and political problems that may 
still lie ahead of its universal adoption in the legal process.

In order to improve the understanding of these issues an elementary 
logical formalization of reasoning with probabilities might be useful.
With that perspective in mind we came to the conclusion
that in spite of the abundance of introductory texts to probability theory,
the development of an axiomatic approach from first 
principles may yet cover new ground. 
The formalization of probabilities in terms of equational logic 
outlined above is intended to serve as a point of departure from which to develop
presentations of probability theory that may be
be helpful when a formal and logically precise perspective on reasoning with 
probabilities is aimed at.

We acknowledge many discussions with Andrea Haker (University of Amsterdam) 
regarding the relevance of logically grounded reasoning methodologies in forensic 
science.

\addcontentsline{toc}{section}{References}

\appendix

\section{A standard example of equational probabilistic reasoning}
\label{sec:5}
In this appendix we analyse a straightforward example of an application of Bayes' 
theorem.
This example has been derived from Example 1.2 as presented in the freely 
accessible 2015-version of \cite{Barber2012}. In Appendix~\ref{subsec:5.1} the
example is described, and in Appendix~\ref{subsec:5.2} we analyse how a simple
modification of this example may turn it into an inconsistent one and draw some conclusions.

\subsection{The example}
\label{subsec:5.1}
We assume the following hypothetical but conceivable data: 
\begin{enumerate}
\item A rare disease RD occurs with probability $1/100,000$ in the population of a 
country CO.
\item A potentially problematic nutritional habit NH is very widespread, in fact 4 
out of 10 people in CO show NH.
\item It has been found that 8 out of 10 persons in CO who are suffering from RD 
show NH as well.
\end{enumerate}
The question is to find the probability that someone showing NH suffers from RD. 
In order to answer that question the formalization
of the three facts is as follows: we assume that \textsl{\textsc{rd}} and \textsl{\textsc{nh}} 
are names for
events, and we assume that the PF $P$ comprises the available probabilistic data: 
$P(\textsl{\textsc{rd}}) = 1/100,000$, $P(\textsl{\textsc{nh}}) = 4/10$, and
$P^0(\textsl{\textsc{nh}}\midd \textsl{\textsc{rd}}) = 8/10$.

Strictly speaking \textsl{\textsc{rd}} and \textsl{\textsc{nh}} are used as event variables 
and the assumptions are viewed as conditions.  
Computing a probability is used as a shorthand for proving that it equals 
some real value.

The question then is to compute $p=P^0(\textsl{\textsc{rd}}\midd \textsl{\textsc{nh}}) $. 
Using Bayes' theorem one finds: 
\[p=\frac{P^0(\textsl{\textsc{nh}} \midd \textsl{\textsc{rd}}) \cdot 
P(\textsl{\textsc{rd}})}{P(\textsl{\textsc{nh}})} 
= \frac{8/10 \cdot 1/100,000}{4/10} = 0.2 \cdot 10^{-4}.\]
Thus, we find for $E=\{P(\textsl{\textsc{rd}}) = {1/100,000}, P(\textsl{\textsc{nh}}) = {4/10},
P^0(\textsl{\textsc{nh}}\midd \textsl{\textsc{rd}}) = {8/10} \}$ that  
\begin{align*}
&\BA+\Md+\Sign+\PFP +E\vdash
P^0(\textsl{\textsc{rd}}\midd \textsl{\textsc{nh}})  = {0.2 \cdot 10^{-4}}.\end{align*}
At face value this result is informative.

\subsection{Modifications of the example}
\label{subsec:5.2}
One may notice that if NH were less widespread, 
say $P(\textsl{\textsc{nh}})=1/500$, the  value of 
$P^0(\textsl{\textsc{rd}}\midd\textsl{\textsc{nh}})$ computed above
changes significantly: in this case we find for 
$q=P^0(\textsl{\textsc{rd}}\midd\textsl{\textsc{nh}})$ that 
\[q=\frac{P^0(\textsl{\textsc{nh}}\midd \textsl{\textsc{rd}})\cdot P(\textsl{\textsc{rd}})}{P(\textsl{\textsc{nh}})} = 
\frac{8/10 \cdot 1/100,000}{1/500} = 0.4 \cdot 10^{-2}.\]
Again, at face value this result is informative.

Now we may consider the case that the occurrence of NH is even more rare, say 
1 out of 1,000,000. For $r=P^0(\textsl{\textsc{rd}}\midd\textsl{\textsc{nh}})$ we find 
in this case
\[r=\frac{P^0(\textsl{\textsc{nh}} \midd \textsl{\textsc{rd}}) \cdot 
P(\textsl{\textsc{rd}})}{P(\textsl{\textsc{nh}})}  = \frac{8/10 \cdot 1/100,000}
{1/1,000,000} = 8.\]
This outcome is of course ``wrong''  (probabilities are 
supposed not to exceed 1).

The interesting aspect of this example and its modifications
is that the first two equations and 
computations (for $p$ and for $q$) correspond 
with conventional textbook examples, while the third variation 
(for $r$) indicates that something 
might have gone wrong in all three cases. We conclude this:
\begin{enumerate}
\item
The production of a value for $P^0(\textsl{\textsc{rd}}\midd \textsl{\textsc{nh}})$ 
that exceeds 1 constitutes a failure of the 
reasoning process at hand.

\item
That failure is caused by an underlying fault (the failure is 
merely a symptom of that fault).

\item
The failure lies in non-detection of the fact that the third set of assumptions is 
incoherent: it represents a specification of a partial function from $E$ to $\reals_0$  
which cannot be extended to a total PF. 
In this case, with $E=\{P(\textsl{\textsc{rd}}) = {10^{-5}}, P(\textsl{\textsc{nh}}) = {10^{-6}},
P^0(\textsl{\textsc{nh}}\midd \textsl{\textsc{rd}}) = {8 \cdot 10^{-1}} \}$:
\begin{align*}
&\BA+\Md+\Sign+\PFP + E \vdash 0=1.
\end{align*}
To see this one may notice that the data from which 
$P^0(\textsl{\textsc{rd}}\midd \textsl{\textsc{nh}})$ has 
been computed allow the following proof:
\[1/1,000,000 = 
P(\textsl{\textsc{nh}}) \geq P(\textsl{\textsc{nh}}\wedge\textsl{\textsc{rd}}) = 
P^0(\textsl{\textsc{nh}}\midd \textsl{\textsc{rd}}) \cdot P(\textsl{\textsc{rd}}) = 
8/10 \cdot 1/100,000\] 
from which one easily obtains 0 = 1. 

\item 
Determining the fault underlying the failure is harder. When providing an example either a 
coherence check should have been be applied to the original data, 
or the risk of getting invalid results must be accepted. 
Both assertions lie outside equational logic proper.
As the consistency check is an NP-complete question in general 
the suggestion to check consistency first may be considered unconvincing. 

\item 
Assuming that data come from valid experimental procedures, the realistic background of 
these data may be understood to provide a justification for not checking consistency in 
advance of further usage. Under that
assumption calculations involving Bayes' theorem such as in the example are justified. 
\end{enumerate}
\end{document}